\newtheorem{theorem}{Theorem}
\newtheorem{lemma}{Lemma}
\newtheorem{property}{Property}
\newcommand{\figref}[1]{\figurename~\ref{#1}}
\newcommand\bbR{\mathbb{R}}
\newcommand\bbZ{\mathbb{Z}}
\newcommand\bff{{\bf{f}}}
\newcommand\dd{\,\mathrm{d}}
\newcommand{\mB}{\mathcal{B}}
\newcommand{\Vw}{V_w}
\newcommand\BigNorm[1]{ \left \| #1 \right \| } 
\newcommand\pd[2]{\dfrac{\partial {#1}}{\partial {#2}}}
\newcommand\opd[2]{\dfrac{\dd {#1}}{\dd {#2}}}
\newcommand\ri{{\rm{i}}} %虚数单位
\newcommand\comment[1]{}
\theoremstyle{remark} }
\begin{document}

\title{Singularity-free Numerical Scheme  for the  Stationary Wigner Equation}

\author{
% Ruo Li\thanks{HEDPS \& CAPT, LMAM \& School of Mathematical
%     Sciences, Peking University, Beijing, China, email: {\tt
%   rli@math.pku.edu.cn}.}, ~~ 
  Tiao Lu\thanks{CAPT, HEDPS, LMAM,
 IFSA Collaborative Innovation Center of MoE, \&
    School of Mathematical Sciences, Peking University, Beijing,
    China, email: {\tt tlu@math.pku.edu.cn}.},
    ~~ Zhangpeng Sun \thanks{School of Mathematical Sciences, 
	  Peking University, Beijing, China, 
	  email: {\tt sunzhangpeng@pku.edu.cn}.}
}
\maketitle

\begin{abstract}
% Motivated by quantum transport modeling, 
% we propose a new numerical method to solve a one-dimensional 
% stationary Wigner equation based on the observation 
% $$\int V_w(x,-v') f(x,v')d v' =0$$ under some assumption. We prove 
% that the singularity at velocity zero is removed in the proposed 
% scheme, and achieve numerical convergence with respect to the position
% and velocity resolution. 

For the stationary Wigner equation with inflow boundary conditions,
its numerical  convergence with respect to the velocity mesh size 
are deteriorated due to the singularity at velocity zero. 
In this paper, using the fact that the solution of the stationary Wigner equation
 is subject to an algebraic constraint, we prove that the Wigner equation can 
 be written into a form with a bounded operator $\mathcal{B}[V]$,
 which is equivalent to the operator $\mathcal{A}[V]=\Theta[V]/v$ 
 in the original Wigner equation under some conditions. 
 Then the discrete operators discretizing $\mathcal{B}[V]$ are proved to 
 be uniformly  bounded with respect to the mesh size.
 Based on the therectical findings, a signularity-free numerical method is proposed.
 Numerical reuslts are proivded to show our improved numerical scheme performs
 much better in 
 numerical convergence than the  original scheme based on discretizing $\mathcal{A}[V]$. 

\vspace*{4mm}
\noindent {\bf Keywords:} stationary Wigner equation; singularity-free; numerical convergence; 
\end{abstract}

\section{Introduction}

The Wigner transport equation is one of the quantum mechanical
frameworks. It is proposed by E. Wigner in 1932 as a quantum correction
to the classical statistical mechanics{\cite{Wigner1932}}. 
Though the Wigner function may take negative values, it has a
non-negative marginal distribution and can express system observables
in the same way as
the Boltzmann probability density function, thus it is called a  
quasi-probability density function. The strong similarity between the Wigner 
equation and the Boltzmann equation makes it convenient to borrow 
some describing tools of the latter, e.g., the boundary 
conditions and the scattering terms{\cite{ferry_transport}}.

The Wigner equation has been used in many fields. For example, 
Frensley successfully reproduced the negative differential
resistance phenomena of resonant tunneling devices by numerically
solving the following one-dimensional Wigner equation
\begin{equation}
  \frac{\partial f}{\partial t}+ v\frac{\partial f}{\partial x} - \Theta[V]f = 0 , x \in(-l/2,l/2), v \in
  \bbR , 
\end{equation}
with inflow boundary conditions
\begin{equation}
  f(-l/2,v) = f_l(v),  \text{ if } v>0;   f(l/2,v)=f_r(v), \text{ if } v<0.
\end{equation}
$\Theta[V]$ is a pseudo-differential operator that will be
explained later. Since then, the Wigner equation 
has attracted many researchers in numerical simulation (e.g.,
\cite{KoNe06} and references therein), and 
various numerical methods for the Wigner equation have been
proposed, 
such as finite difference methods
\cite{Frensley1987,Jensen1990,Biegel1996, Jiang2011}, 
spectral methods \cite{ShLu09, Ringhofer1990,Biegel1996},
spectral element method \cite{ShLu09}, 
and Monte Carlo methods{\cite{nedialkov_04,Sellier20142427}. 
When the Hartree potential is included, the
Wigner-Poisson system self-consistently can be solved
\cite{MarkowichDegond1990,Jensen1991,Arnold1996,Zhao03,Manzini2005}. 
The nonlinear iteration for the coupled Wigner-Poisson system 
deserves a serious study and in \cite{Biegel1996}  the Gummel method
and the 
Newton method were compared for the RTD simulation in terms of 
efficiency, accuracy and robustness. As for the linear stationary
Wigner equation with inflow boundary boundary conditions, there are
still a lot of open problems, for example, the well-posedness, the
numerical convergence, etc. In this paper, we focus on the linear
problem. 

Many mathematicians have been drawn to study the Wigner equation, e.g., 
\cite{Neunzert1985,Markowich1989,goudon02,Golo02,Morandi2012}. 
The existence and uniqueness of the solution for the Wigner equation 
have been proved by using of the theoretical result of the Schr\"odinger equation. 
But there are still a lot of open problems. One of them is to 
build the well-posedness result for the Wigner boundary value problem
(the stationary Wigner equation with inflow boundary conditions), 
which is a popular model in numerical simulation of the nano 
semiconductor devices. We note that some researchers have proved the 
well-posedness of the Wigner boundary value problem in some special
cases, for example, \cite{ALZ00} for a velocity semi-discretization
version, \cite{Zweifel2001} for an approximate problem by removing a 
small interval centered at velocity zero, and \cite{LiLuSun2014} for a
periodical potential.  

It is pointed out that the Wigner BVP problem is more difficult than
the Wigner initial value problem, and one of the reason is that the
inflow boundary conditions break up the equivalence between the Wigner
equation and the Schr\"odinger equation. When using a numerical
methods to discretize the Wigner equation, computational parameters
such the mesh size and the correlation length are sensitive and need a
careful calibration \cite{Zhao03}. In \cite{Kim1999}, several
numerical schemes including first-order (FDS) and second-order difference 
schemes (SDS) for discretization of the advection term 
$v\frac{\partial f}{\partial x}$ of the Wigner equation were compared. 
However, to authors' knowledge, a detailed accuracy study of the
finite difference methods for the Wigner BVP with respect to the velocity mesh 
size has not been reported.  One of the difficulties is that the operator 
$\frac{1}{v}\Theta[V]$ is singular at $v=0$, which results in the
numerical solution's oscillation and blowing up as the velocity mesh size
goes to zero.  Assuming that the Wigner BVP has a unique smooth
solution, we observe that the solution must satisfy 
$(\Theta[V]f)(x,0) =0$. In this paper, we design a new numerical 
scheme by applying this constraint in our numerical scheme. 

The rest of the paper is arranged as follows. In Section 2, we rewrite the original 
Wigner equaiton into a form with a bounded operator
 $\mathcal{B}[V]$ which is equivalent to $\mathcal{A}[V]$
 under the assumption that the  distribution function satsfies an algebraic constraint.  
In Section 3, we prove the discrete operators discretizing $\mathcal{B}[V]$ to 
 be uniformly  bounded with respect to the mesh size. 
 Based on this analysis,  a new numerical method is proposed. 
At last, in Section 4,  we give some numerical examples to show the numerical convergence
with respect to $x$-space and $v$-space.  Some clonclusion remarks are given in last section.

\section{Stationary Wigner equation and an equivalent form}
We are concerned with the following stationary Wigner equation (or
"quantum Liouville equation ")
\begin{equation} \label{eq:Wigner}
 v \frac{\partial f}{\partial x} = \Theta[V]f(x,v),
\end{equation}
where $\Theta[V]$ is a pseudo-differential operator defined by
\begin{equation}
\Theta[V]f(x,v)=\int_{- \infty}^{\infty} 
 V_w \left( x, v - v' \right) f 
 \left( x, v' \right) d v' .
\end{equation}
$V_w(x,v)$ is called the Wigner potential and defined by 
\begin{equation} \label{eq:OriginalVw}
V_w \left( x, v \right) = \frac{i}{2 \pi} \int_{- \infty}^{\infty} 
D_V(x,y)
\exp \left( i v y \right) \mathrm{dy}, 
\end{equation}
where 
\[
D_V(x,y) = V \left( x + y / 2 \right) - V \left( x - y / 2 \right) 
\]
is the difference of the potential $V$ at positions $x+y/2$ and
$x-y/2$. $\Theta[V]$ can be proved to be a a continuous (bounded)
linear operator on $L^2(\mathbb{R})$ if $V\in L^{\infty}(\bbR)$.

In this paper, we use the Fourier transform and its
corresponding inverse defined as
\begin{equation}\label{eq:Fourier}
 \hat f (y) = \mathcal{F}(f) =\int_{-\infty}^\infty f(v) \exp(-i v y) \dd v,
\end{equation}
\begin{equation}\label{eq:Fourier inverse}
 f (v) = \mathcal{F}^{-1}(\hat f) =\frac{1}{2\pi}\int_{-\infty}^\infty
\hat f(y)  \exp(i v y) \dd y. 
\end{equation}
In the sense of Fourier transform, $\Theta[V]$ can also be written as 
\begin{equation}
 \Theta[V] f(v)=V_w \ast f (v) = i \mathcal{F}^{-1}(D_V) \ast 
 \mathcal{F}^{-1}(\hat f) =i\mathcal{F}^{-1}(D_V \hat f),
\end{equation}
where $\ast$ denotes convolution.
By the Parseval equality, we have
\begin{equation}
 \|\Theta[V]f\|_2 =\frac{1}{2\pi} \|D_V \hat f\|_2
 \leqslant 2\cdot \frac{1}{2\pi} \|V\|_\infty \cdot \|\hat f\|_2
 =2 \|V\|_\infty \cdot \| f\|_2.
\end{equation}
Immediately, we have the following lemma.
\begin{lemma}\label{lemma:bound}
Suppose that the potential $V \in L^{\infty}(\bbR)$. For any
$x\in\bbR$, the operator $\Theta[V]$ is
a bounded linear operator on $L^2(\bbR_v)$, and
\[
\| \Theta[V] \| \leqslant 2 \| V \|_{\infty}.
\]
\end{lemma}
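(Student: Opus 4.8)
The plan is to move the entire estimate onto the Fourier side, where the convolution operator $\Theta[V]$ becomes multiplication by the symbol $D_V(x,\cdot)$. Concretely, I would start from the identity $\Theta[V]f = i\,\mathcal{F}^{-1}(D_V \hat f)$ recorded just above, so that bounding $\Theta[V]$ reduces to two independent facts: (i) multiplication by $D_V$ is a bounded operator on $L^2(\bbR_v)$ with norm controlled by $\|V\|_\infty$, and (ii) the (inverse) Fourier transform is an isometry up to the normalization constant fixed by the convention in \eqref{eq:Fourier}.

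For (i), I would use only the pointwise structure of the symbol. Since $D_V(x,y)=V(x+y/2)-V(x-y/2)$ and $V\in L^\infty(\bbR)$, the triangle inequality gives $|D_V(x,y)|\leqslant 2\|V\|_\infty$ for almost every $y$, with a bound that is \emph{uniform} in $x$. Hence for any $g\in L^2(\bbR)$ one has $\|D_V\,g\|_2\leqslant 2\|V\|_\infty\,\|g\|_2$; in particular $D_V\hat f\in L^2$, so $\mathcal{F}^{-1}(D_V\hat f)$ is a well-defined $L^2$ function and $\Theta[V]f\in L^2(\bbR_v)$. Linearity of $\Theta[V]$ is inherited from linearity of convolution, so it only remains to track constants.

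For (ii), the Parseval/Plancherel identity in the convention of \eqref{eq:Fourier} reads $\|g\|_2=\tfrac{1}{\sqrt{2\pi}}\|\hat g\|_2$, equivalently $\|\hat f\|_2=\sqrt{2\pi}\,\|f\|_2$. Chaining the two facts and using $|i|=1$,
\begin{equation*}
\|\Theta[V]f\|_2=\tfrac{1}{\sqrt{2\pi}}\|D_V\hat f\|_2\leqslant \tfrac{1}{\sqrt{2\pi}}\,2\|V\|_\infty\,\|\hat f\|_2=2\|V\|_\infty\,\|f\|_2,
\end{equation*}
which is exactly the claimed operator-norm bound $\|\Theta[V]\|\leqslant 2\|V\|_\infty$.

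I expect no serious obstacle here; the only point requiring care is the bookkeeping of normalization constants, since the factor $\tfrac{1}{\sqrt{2\pi}}$ produced by $\mathcal{F}^{-1}$ must cancel exactly against the $\sqrt{2\pi}$ relating $\|\hat f\|_2$ to $\|f\|_2$ to leave the clean constant $2$. Two minor technical remarks are worth flagging: the bound $2\|V\|_\infty$ does not depend on $x$, which is precisely why the statement holds for any $x\in\bbR$ with a single uniform constant; and if one wants a fully rigorous multiplier argument, one should first establish the estimate on the dense subspace of Schwartz functions (where all transforms converge classically) and then extend it to $L^2$ by density together with the boundedness just obtained.
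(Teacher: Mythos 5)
Your proof is correct and follows essentially the same route as the paper: pass to the Fourier side via $\Theta[V]f = i\,\mathcal{F}^{-1}(D_V\hat f)$, bound the multiplier by $|D_V|\leqslant 2\|V\|_\infty$, and invoke Parseval/Plancherel. Your bookkeeping of the normalization constants is in fact slightly more careful than the paper's displayed line, which writes $\tfrac{1}{2\pi}$ where $\tfrac{1}{\sqrt{2\pi}}$ belongs, though both arrive at the same final bound $2\|V\|_\infty$.
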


Dividing \eqref{eq:Wigner} by $v$ gives
\begin{equation} \label{eq:evolution}
 \pd{f}{x}=\mathcal A[V]f(x,v) 
\end{equation}
where the operator $\mathcal A[V]$ is defined by 
\begin{equation}
\mathcal A[V]f(x,v)=\frac{1}{v} \Theta[V] f(x,v)= \dfrac{1}{v} \int_{-\infty}^\infty V_w(x,v-v^\prime) f(x,v^\prime) \dd v^\prime.
\end{equation}
The equation \eqref{eq:evolution} can be viewed as an evolution system,
which gives us a convenient way to analyze and 
compute the stationary Wigner equation. However,
$\frac{V_w(x,v-v')}{v}$, the kernel of $\mathcal{A}[V]$,  is singular
at $v=0$, and this brings great difficulty to solve and analyze 
\eqref{eq:evolution}.
Although we usually avoid the point at "$v=0$" to be a mesh
point in numerical experiments,  the numerical distribution can
suffer from severe oscillation when using a small velocity mesh size.
It is a reason that no numerical convergence work has been published. 

Under the condition that $f(x,v)$ is Lipschitz continuous with respect
to $x$ which ensures the boundedness of $\pd{f(x,v)}{x}$, setting  $v
= 0$ in \eqref{eq:Wigner} yields
\begin{equation}\label{eq:V0}
\int V_w \left( x, - v' \right) f \left( x, v' \right) d v' = 0.
\end{equation}
This is an important property of the stationary Wigner equation,
which will be used to design a numerical method. 
Subtracting \eqref{eq:V0} from \eqref{eq:Wigner}, we obtain 
\begin{equation}\label{eq:ImprovedWigner}
v \frac{\partial f}{\partial x} = 
\int \left[ V_w \left( x, v - v' \right)
   - V_w \left( x, v \right) \right] f \left( x, v' \right) d v'. 
\end{equation}
Then we divide the above equation by $v$, and obtain
\begin{equation} \label{eq:ImprovedWignerB}
\frac{\partial f}{\partial x} = \mathcal{B}[V] f(x,v),
\end{equation}
where the operator $ \mathcal{B}[V] $ is defined as 
\begin{equation}\label{eq:BV}
\mathcal{B}[V] f(x,v)=\int \frac{V_w
   \left( x, v - v' \right) - V_w \left( x, - v' \right)}{v} f(x,v') d v' .
\end{equation}
Equation \eqref{eq:ImprovedWignerB}-\eqref{eq:BV} is equivalent to the stationary
Wigner equation if \eqref{eq:V0} holds.

Now we focus on the properties of the operator $\mathcal B[V]$. It is 
evident that the operator $\mathcal B[V]$ is different 
from $\mathcal A[V]$. But they are equal on some special spaces, 
e.g., $f(x,v)$ is an even function with respect to $v$. We will prove 
that $\mathcal{B}[V]$ is a bounded linear operator under some 
assumptions. The details are shown in the following Theorem
\ref{Thm:BoundofB}. 

\begin{theorem} \label{Thm:BoundofB}
 Suppose that for any $x \in [-l/2,l/2]$, the Wigner
potential $V_w(x,\cdot)$ defined in \eqref{eq:OriginalVw} 
belongs to $H^1(\bbR_v)$. Then $\mathcal
B[V]: 
 L^2(\bbR_v) \to L^2(\bbR_v)$ is a bounded
 linear operator.
\end{theorem}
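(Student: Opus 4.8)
The plan is to exploit the special structure that the subtraction in \eqref{eq:BV} builds in: $\mathcal B[V]$ is nothing but the composition of the bounded operator $\Theta[V]$ with a divided difference at $v=0$. First I would set $g := \Theta[V]f = V_w\ast f$ and observe that $g(0)=\int V_w(x,-v')f(x,v')\,dv'$, so that the defining integral \eqref{eq:BV} collapses to
\begin{equation*}
\mathcal B[V]f(x,v)=\frac{g(v)-g(0)}{v}.
\end{equation*}
This makes the roles transparent: the $1/v$ weight acts only on a quantity that already vanishes to first order at $v=0$, so the whole question reduces to regularity of $g$ together with control of the weight $1/v$ away from the origin.

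The next step is to read off the regularity of $g$ from the hypothesis via the Fourier side. Since $\widehat{V_w}(x,\cdot)=iD_V(x,\cdot)$, Parseval shows that $V_w(x,\cdot)\in H^1(\bbR_v)$ is equivalent to $D_V(x,\cdot)\in L^2$ and $yD_V(x,\cdot)\in L^2$. Because $\hat g = iD_V\hat f$ and $\widehat{g'}=-yD_V\hat f$, each of these Fourier transforms is a product of two $L^2$ functions, hence lies in $L^1$ by Cauchy--Schwarz; Fourier inversion then gives $g\in C^1$ with
\begin{equation*}
\|g\|_\infty\leqslant\tfrac{1}{2\pi}\|D_V\|_2\|\hat f\|_2,\qquad
\|g'\|_\infty\leqslant\tfrac{1}{2\pi}\|yD_V\|_2\|\hat f\|_2 .
\end{equation*}
These two quantitative facts are what I expect to carry the whole proof. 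Finally I would split $\bbR_v$ into $\{|v|<1\}$ and $\{|v|\geqslant1\}$: on the bounded part I use the mean-value form $\mathcal B[V]f(v)=\int_0^1 g'(\theta v)\,d\theta$, so $|\mathcal B[V]f(v)|\leqslant\|g'\|_\infty$ and the contribution is finite because the interval has finite length; on the unbounded part I use $|\mathcal B[V]f(v)|\leqslant 2\|g\|_\infty/|v|$ together with the integrability of $1/v^2$ at infinity. Adding the two pieces yields $\|\mathcal B[V]f\|_2\leqslant C\,\|V_w(x,\cdot)\|_{H^1}\,\|f\|_2$.

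The step I expect to be the real obstacle is this regularity/weight balance. The tempting shortcut --- apply a Hardy inequality to $\int_0^1 g'(\theta v)\,d\theta$ to get $\|\mathcal B[V]f\|_2\leqslant 2\|g'\|_2$ --- does not close, because under the bare hypothesis $V_w\in H^1$ the function $g'=V_w'\ast f$ need not lie in $L^2$ (that would require $\widehat{V_w'}=-yD_V\in L^\infty$, strictly stronger than $yD_V\in L^2$). The point that makes everything work is that $D_V,yD_V\in L^2$ is exactly enough, through Cauchy--Schwarz, to place $g$ and $g'$ in $L^\infty$ rather than $L^2$; one then spends this $L^\infty$ control against the two genuinely different behaviours of $1/v$ --- a removable first-order zero in the numerator near $v=0$, and honest square-integrable decay of $1/v$ at infinity. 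If a sharp constant is wanted, an alternative is to compute the Fourier multiplier directly, obtaining $\widehat{\mathcal B[V]f}(y)=-\int_y^{\infty}D_V\hat f$ for $y>0$ and the symmetric tail for $y<0$, after which a Cauchy--Schwarz and Fubini estimate using $\int|\eta|\,|D_V|^2\leqslant\|D_V\|_2\|yD_V\|_2$ gives $\|\mathcal B[V]f\|_2\leqslant(\|D_V\|_2\|yD_V\|_2)^{1/2}\|f\|_2$.
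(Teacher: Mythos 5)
Your proof is correct and follows essentially the same route as the paper: the same split of $\bbR_v$ at $|v|=1$, the same $L^\infty$ bound $\|\Theta[V]f\|_{L^\infty}\leqslant\|V_w(x,\cdot)\|_{L^2}\|f\|_{L^2}$ paired with the integrability of $1/v^2$ at infinity, and the same bound $\|\partial_v V_w(x,\cdot)\|_{L^2}\|f\|_{L^2}$ for the difference quotient near the origin. The only real difference is presentational: you derive the near-origin estimate by Fourier inversion and the fundamental theorem of calculus applied to $g=\Theta[V]f$, whereas the paper applies Cauchy--Schwarz in $v'$ first and then invokes the $L^2$ difference-quotient inequality (Evans, Ch.~5, Thm.~3) for $V_w$ itself --- the two are identical since $\int_0^1 (\partial_v V_w\ast f)(\theta v)\,\dd\theta=\int \frac{V_w(x,v-v')-V_w(x,-v')}{v}f(x,v')\,\dd v'$, and your closing Fourier-multiplier remark is a correct bonus giving a cleaner constant than the paper's $C=8\|V_w\|_{H^1}^2$.
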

\begin{proof}
For any $f \in L^2(\bbR_v)$, 
 \begin{equation} \label{eq:L2normB}
  \|\mathcal{B}[V] f\|_{L^2(\bbR_v)}^2 =\int_{-\infty}^\infty |\mathcal{B}[V] f|^2 \dd v.
 \end{equation}
We will prove the boundedness of $\mathcal{B}[V]$ by estimating \eqref{eq:L2normB}
 on the region $|v|>1$ and the region $|v|\leqslant 1$ respectively.

 First, we consider the part with $\left| v \right| > 1$.  Using $V_w
  \left( x, v \right) \in L^2 \left( \mathbb{R}_v \right)$ and the
  Young's inequality, we have
  \begin{equation} \label{eq:YoungInequality}
    \left\|\Theta[V]  f( x, v )\right\|_{L^\infty(\bbR_v)}
     =\|V_w \left( x, v \right) \ast f \left( v
    \right) \|_{L^{\infty} \left( \mathbb{R}_v \right)} \leq \|V_w
    \left( v \right) \|_{L^2(\bbR_v)} \|f \left( v \right) \|_{L^2
      \left( \mathbb{R}_v \right)}. 
  \end{equation} 
  By the Cauchy-Schwartz inequality, we then have  
  \begin{equation} \label{eq:CauchyInequality}
    \left|
      \int_{\mathbb{R}_{v'}} V_w \left( x, 0 - v' \right) f \left( x,
        v' \right) \dd v' \right| \leq \|V_w \left( x, v \right)
    \|_{L^2(\bbR_v)} \|f \left( x, v \right) \|_{L^2 \left(
        \mathbb{R}_{v} \right)} .  
  \end{equation}
  It is obtained directly from \eqref{eq:YoungInequality} and 
  \eqref{eq:CauchyInequality} that 
  \begin{align}
    & \qquad \int_{\left| v \right| > 1} \left|\mathcal{B}[V]f(x,v) \right|^2 \dd v \notag \\
    & \leqslant
    2 \int_{\left| v \right| > 1} \left| \frac{\Theta[V]
        f \left( x, v \right)}{v} \right|^2 \dd v +2  \int_{\left| v
      \right| > 1} \frac{\|V_w \left( x, \cdot \right) \|_{L^2(\bbR_v)}^2 \|f
      \left( x, \cdot \right) \|_{L^2 \left( \mathbb{R}_{v}
        \right)}^2}{v^2} \dd v  \notag \\ 
    & \leqslant 8\|V_w
    \left( x, v \right) \|^2_{L^2(\bbR_v)} \|f \left( x, v
    \right) \|^2_{L^2(\bbR_v)} . \label{eq:firstPart}
  \end{align}

  Then, we consider the part with $\left| v \right| \leqslant
  1$. According to the Cauchy-Schwartz inequality again, we have
  \begin{align*}
    \left|\mathcal{B}[V] f(x,v) \right|   & \leqslant
    \int_{\mathbb{R}} \left| \frac{V_w \left( x, v - v'
	\right) - V_w \left( x, 0 - v' \right)}{v} \right| \left| f \left(
        x, v' \right) \right| \dd v' \\
    & \leqslant \left \| \frac{V_w \left( x, v - v' \right) - V_w \left( x,
          0 - v' \right)}{v} \right\|_{L^2 \left( \mathbb{R}_{v'} \right)} \|f
    \left( x, \cdot \right) \|_{L^2(\bbR_v)}, \quad v \in \left[ - 1,
1 \right] .
  \end{align*}
  By using Theorem 3 in Chapter 5 of \cite{Evans2010}, we have
  \[
  \BigNorm{ \frac{\Vw \left( x, v_{} - v'
      \right) - \Vw \left( x, - v' \right)}{v} }_{L^2 \left(
      \mathbb{R}_{v'} \right)} \leqslant 
  \BigNorm{ \partial_{v'} \Vw \left( x,
      v' \right) }_{L^2 \left( \mathbb{R}_{v'} \right)} .
  \]
This fact, together with the Cauchy-Schwartz inequality,
  gives us the following estimate on the velocity interval $[-1,1]$
  that
  \begin{equation} \label{eq:secondPart}
    \int_{\left| v \right|
      \leq 1} \left|\mathcal{B}[V]f(x,v) \right|^2 \dd v \leq 
    \|f \left( x, v \right) \|^2_{L^2(\bbR_v)}   \BigNorm{ \partial_{v}
\Vw \left( x,
      v \right) }^2_{L^2 \left( \mathbb{R}_{v} \right)} 
  \end{equation} 
  Collecting \eqref{eq:firstPart} and \eqref{eq:secondPart} together results in
  \[
  \| \mathcal{B}[V] f(x,v) \|^2_{L^2(\bbR_v)}
  \leqslant C \|f \left( x, v \right) \|^{^2}_{L^2(\bbR_v)} 
  \]
  where 
  \[ C = 8 \BigNorm{\Vw \left( x, v
    \right) } ^{^2}_{H^1(\bbR_v)} . 
  \]
This completes the proof that $\mathcal{B}[V]$ is a  bounded linear
  operator on $L^2(\bbR_v)$.
  
\end{proof}

% Noticing that $D_V(x,y)$ is an odd function of $y$, we can rewrite 
% the definite of $V_w \left( x, v \right)$ given in 
% \eqref{eq:OriginalVw} into 
% \begin{equation} \label{eq:VwSin}
% V_w(x,v) = -\dfrac{1}{\pi}\int_0^{\infty}D_V(x,y) \sin( v y ) d y. 
% \end{equation}
% Further, plugging \eqref{eq:VwSin} into the integral kernel in 
% \eqref{eq:ImprovedWignerA}, we can simplify the kernel into
% \begin{equation} \label{eq:GoodVw}
% \begin{split}
% \frac{V_w \left( x, v - v' \right) - V_w \left( x, - v' \right)}{v} &=
% -\dfrac{1}{\pi} \int_0^{\infty} D_V(x,y)  \dfrac{\sin \left( 
% \left( v - v' \right) y \right) - \sin \left( - v' y \right)}{v} 
% d y \\
% & = 
% -\dfrac{1}{\pi} \int_0^{\infty} y  D_V(x,y)   
%  \cos \left( \dfrac{\left( v - 2 v' \right) y }{ 2 } \right) 
%  \mathrm{sinc} \left(  \dfrac{v y}{ 2 } \right) d y .
%  \end{split}
% \end{equation}
% where the sinc function is defined by 
% \begin{equation}
% \mathrm{sinc} (x) = \dfrac{ \sin (x) }{x }. 
% \end{equation}
% Here we would like point out the pseudo-differential operator 
% in \eqref{eq:ImprovedWignerA} using a sinc function is bounded, that is
% \begin{equation}\label{eq:VwBound}
% \left \|
% \int \dfrac{ V_w(x,v-v') - V_w (x,-v')}{v} f(x,v')d v'  
% \right \|_{L^2} 
% \leqslant   C \left \| f(x,v) \right \|_{L^2}, 
% \end{equation}
% only if some appropriate condition are satisfied by $D_V(x,y)$. 

We have proved the operator  $\mathcal{B}[V]$ is bounded, thus obtained 
a singularity-free form \eqref{eq:ImprovedWignerB}-\eqref{eq:BV}, which is
euquivalent to the original Wigner equation. A numerical scheme based on 
the singularity-free from will be porposed in the next section.

\section{Singularity-free numerical scheme}
 We start from  the discretization of the
pseudo-differential operator $\Theta[V]$, then define the corresponding discrete operators of $\mathcal{A}[V]=\frac{1}{v}\Theta[V]$
and $\mathcal{B}[V]$, respectively. At the end of this section, 
we prove that the discrete operators of $\mathcal{B}[V]$ is uniformly bounded with respect to the velocity mesh size.

Before introducing the discretization of the pseudo-differential
operator $\Theta[V]$, 
we define a new operator $\Theta^h[V]: L^2(\bbR_v) \to L^2(\bbR_v)$, the approximation of $\Theta[V]$, 
\begin{eqnarray}
  \Theta^h[V](f) &  =& i \mathcal{F}^{-1}_{y\rightarrow v} \left(D_V
	\chi_{|y|\leqslant R^h } \hat{f}(y) \right),  \qquad \forall f \in L^2(\bbR_v),
\end{eqnarray}
where $\chi_{|y|\leqslant R^h}$ is the characteristic function of
$\{y: |y|\leqslant R^h\}$.
Here $h$ is related to the velocity mesh size ($\Delta v=2\pi h$), and $R^h$ is
related to $h$ by
\begin{equation}\label{Rhdef}
R^h = \frac{1}{2h}.
\end{equation}
 In some papers, e.g.,
\cite{Frensley1987}, $R^h$ is called the coherence length.
We introduce a subspace $L^2_h(\bbR)$ of $L^2(\bbR)$ defined as 
\begin{equation}
L^2_h(\bbR) = \left\{ g \in L^2(\bbR) | \text{supp} \hat{g} \subset
[-R^h,R^h]\right\}.
\end{equation}
For any function $f^h \in L^2_h(\bbR)$, by using the
Shannon sampling theorem, we have 
\begin{equation}\label{fhs}
 f^h(v)= \sum_{n=-\infty}^\infty f_n \text{sinc} \left(
\frac{v-v_n}{2h} \right),
\end{equation}
where 
\begin{equation}
\text{sinc}(x) := \dfrac{\sin x}{x},
\end{equation}
and 
\begin{equation} \label{eq:fnvn}
f_n=f^h(v_n),  v_n =(2n+1)\pi h ,\ n\in \bbZ .
\end{equation}
$ \{v_n : n\in \bbZ\}$are the sampling velocity
points, and the series is absolutely and uniformly convergent on
compact sets \cite{Zayed1993}. Actually,
\begin{equation}
 \int_{\bbR_v}
\text{sinc}(\frac{v-v_n}{2h})\text{sinc}(\frac{v-v_m}{2h}) \dd v
 = \begin{cases} 2 \pi h, & \text{if } n=m, \\
    0, & \text{else,}
   \end{cases}
\end{equation}
and 
$\left\{ \text{sinc}(\frac{v-v_n}{2h}) : n\in \bbZ \right\}$ is
an orthogonal basis of $L^2_h(\bbR)$.  
From \eqref{fhs},
we can define an isometry (disregarding a constant) 
$\mathcal{I}_h: L^2_h(\bbR) \rightarrow l^2(\bbZ)$: for any $f^h\in
L^2_h(\bbR)$, 
\begin{equation} \label{isometry}
 \mathcal{I}_h f^h = (\cdots, f_{-1}, f_{0},f_{1}, \cdots)^T 
\end{equation}
where $\left\{ f_n\right\}$ is defined in \eqref{eq:fnvn}. It
is easy to see that 
\begin{equation}
 \| \mathcal{I}_h f^h \|_{l^2(\bbZ)} = \frac{1}{\sqrt{ 2\pi h}}
\|f^h\|_{L^2(\bbR)}. 
\end{equation}

$\Theta^h[V]$ can be considered as the
restriction of $\Theta[V]$ on $L^2_h(\bbR)$, and 
there are some obvious properties for the operator $\Theta^h[V]$,
showing in Property 
\ref{Prop: Thetah}. 
\begin{property} \label{Prop: Thetah}
The approximated operator $\Theta^h[V]$ fulfills the following
  properties:
\begin{enumerate}
\item[(i)] if $f\in L^2_h(\bbR)$, 
  then $\Theta^h[V](f) = \Theta[V](f)$;
\item[(ii)] if $f\in L^2(\bbR)$, then $\Theta^h[V](f)$ converges to
$\Theta[V](f)$ in $L^2(\bbR)$ as $h
\rightarrow 0$. If, furthermore, $f$ lies in the Soblev space
$H^s(\bbR)$, $s>0$, we get 
\[
\| \Theta^h[V](f) - \Theta[V](f) \|^2_{L^2(\bbR)}
\leqslant \dfrac{4 \|V\|_{\infty}}{2\pi} \dfrac{\| f
  \|^2_{H^s_{\bbR}}}{\left( 1+(R^h)\right)^s}. 
\]
\end{enumerate}
\end{property}

We consider the discretization of $\Theta[V]f$ in the Wigner equation
for $f(x,v)$ assuming that $f(x,v) \in L^2(\bbR), \forall x \in
[-l/2,l/2]$. We use $f_n(x)$ to represent the numerical approximation of
$f(x,v_n)$, and $\bff = \{f_n: n\in \bbZ\} \in l^2(\bbZ) $. Based on
Property \ref{Prop: Thetah} and the Shannon
sampling theorem, a discrete operator
$\Theta_d[V]:l^2(\bbZ)\rightarrow
l^2(\bbZ)$ as an approximation of $\Theta[V]$ on $L^2(\bbR)$ can be
constructed by  
\begin{equation} \label{thetaddef}
\Theta_d[V] \bff = \mathcal{I}_h  \Theta^h[V] \mathcal{I}^{-1}_h \bff=
2\pi h M^{\Theta_d} \bff \end{equation}
where $\mathcal{I}_h$ is defined in \eqref{isometry},
$\mathcal{I}_h^{-1}$ is the inverse of $\mathcal{I}_h$, 
and $M^{\Theta_d}$ is an infinite dimensional matrix with 
\begin{equation} \label{eq:MTheta_d}
M^{\Theta_d}_{nm} = \frac{\ri}{2\pi}\int_{\bbR} D_V(x,y)
\chi_{(-R^h,R^h)}(y) 
e^{\ri (v_n-v_m) y } \dd y.
\end{equation}
$M^{\Theta_d}$ is the matrix of a discrete
convolution operator and $M^{\Theta_d}_{nm}$ depends only on $n-m$. 
And $M^{\Theta_d}$ is a real-valued skew-symmetric matrix,
i.e., $M^{\Theta_d}_{nm} = - M^{\Theta_d}_{mn}$. 

We are able to establish a property for the operator $\Theta_d[V]$,
which is the discrete analogue
of Lemma \ref{lemma:bound}.
\begin{property}
 The operator $\Theta_d$ is a bounded linear operator on $\ell^2$, and
its norm is estimated uniformly with respect to $h$ by $\| \Theta_d
\|_{\mathcal{L}(\ell^2)} \leqslant 2 \| V \|_{\infty}$.
% Furthermore, the coefficients $a_n$ which define the operator $\Theta_d$ are 
% real and even; in turn, the operator is skew-symmetric.  
\end{property}

A typical semi-discretization of the orignal stationary Wigner equation with
inflow boundary conditions can be written as 
\begin{equation}\label{eq:DiscreteProblem}
\left\{
\begin{array}{ll}
 \opd{\bff}{x} =
 \mathcal{A}_d[V] \bff , & x \in (-l/2,l/2),  \\
f_{n}(-l/2) = f_L(v_n) , & \mathrm{if} \ v_n>0,  \\
f_{n+1/2}(l/2) = f_R( v_n), & \mathrm{if} \ v_n<0,  \\
\end{array}
\right .
\end{equation}
where the operator
$\mathcal{A}_d[V]: l^2\rightarrow l^2$ is
defined 
by 
\[
(A_d[V]\bff)_n = \frac{1}{v_n} (\Theta_d[V]\bff)_n,
\]
where $\Theta_d[V]$ is defined in \eqref{thetaddef}.
$A_d[V]$ is obviously bounded, but its norm will grow to infinity as
the velocity mesh size $h \rightarrow 0$. The property makes the numerical
solution suffer from numerical instability when a small velocity mesh
size $h$ is used, and it also affects the numerical convergence of
the numerical solution. 

Based on the equivalent singularity-free form 
\eqref{eq:ImprovedWignerB}-\eqref{eq:BV}, we can derive 
a semi-discretization scheme for the stationary Wigner equation with inflow boundary conditions
\begin{equation}\label{eq:NewDiscreteProblem}
\left\{
\begin{array}{ll}
 \opd{\bff}{x} =
 \mathcal{B}_d[V] \bff , & x \in (-l/2,l/2),  \\
f_{n}(-l/2) = f_L(v_n) , & \mathrm{if} \ v_n>0,  \\
f_{n+1/2}(l/2) = f_R( v_n), & \mathrm{if} \ v_n<0,  \\
\end{array}
\right .
\end{equation}
Here the discrete operator $\mB_d[V] :l^2\rightarrow l^2$ is obtained by 
discretization of the bounded operator $\mathcal{B}[V]$, and can be written 
out as 
\begin{equation} \label{mbd}
\left(\mB_d[V]\bff \right)_ { n }
= \frac{2\pi h }{v_n} \sum_{m\in \bbZ}
(M^{\Theta_d}_{nm}-a_m )  f_m,
\end{equation}
where $M^{\Theta_d}_{nm}$ is given in \eqref{eq:MTheta_d} and 
\begin{equation} \label{eq:a_m}
a_{m} =\frac{\ri}{2\pi}\int_{\bbR} D_V(x,y) \chi_{(-R^h,R^h)}(y)
e^{-\ri
v_m y} \dd y.
\end{equation}

We will prove $\mB_d[V]$ is uniformly bounded with some assumptions of 
the potential $V$ in the following theorem.
\begin{theorem}
Suppose that $V_w(x,\cdot) \in H^1(\bbR_v)$ for any $x\in [-l/2,l/2]$. 
For a given velocity mesh size $h>0$, we define $\mB_d[V]
:l^2(\bbZ)\rightarrow l^2(\bbZ)$ as in \eqref{mbd} where
$v_n=(2n+1)\pi h$. 
Then $\mB_d[V]$ is uniformly bounded i.e., $\| \mB_d[V]\| \leq C$
where $C$ does not depend on the velocity mesh size $h$ or $x$.  
\end{theorem}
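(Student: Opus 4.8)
The plan is to mirror the continuous proof of Theorem \ref{Thm:BoundofB} at the discrete level, splitting the $\ell^2$ norm of $\mB_d[V]\bff$ into a large-velocity part and a small-velocity part, and showing each is bounded by a constant independent of $h$. Recall from \eqref{mbd} that
\[
\left(\mB_d[V]\bff\right)_n = \frac{2\pi h}{v_n}\sum_{m\in\bbZ}\left(M^{\Theta_d}_{nm}-a_m\right)f_m.
\]
The key observation is that $a_m$ in \eqref{eq:a_m} equals $M^{\Theta_d}_{nm}$ evaluated at $v_n=0$, so $M^{\Theta_d}_{nm}-a_m$ is a discrete difference quotient of the kernel in the $v_n$ variable. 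Since $v_n=(2n+1)\pi h$ is never zero, the split I would use is $|v_n|>1$ versus $|v_n|\leqslant 1$. First I would handle $|v_n|>1$: here I bound $\tfrac{2\pi h}{v_n}$ by $2\pi h$ (up to the factor $1/|v_n|<1$) and split the sum into the $M^{\Theta_d}_{nm}f_m$ piece and the $a_m f_m$ piece. The first piece is controlled by the uniform bound $\|\Theta_d\|_{\mathcal{L}(\ell^2)}\leqslant 2\|V\|_\infty$ from the preceding Property, combined with the $\ell^2$ summability of $1/v_n^2$ over $|v_n|>1$; the second piece is a rank-one-type term whose $\ell^2$ norm is estimated via Cauchy--Schwarz and the relation $2\pi h\sum_m|a_m|^2 \approx \|V_w(x,\cdot)\chi\|_{L^2}^2$.

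For the part $|v_n|\leqslant 1$ I would argue as follows. For each fixed $n$, Cauchy--Schwarz in the $m$-sum gives
\[
\left|\left(\mB_d[V]\bff\right)_n\right| \leqslant 2\pi h\left(\sum_{m\in\bbZ}\left|\frac{M^{\Theta_d}_{nm}-a_m}{v_n}\right|^2\right)^{1/2}\|\bff\|_{\ell^2},
\]
so it suffices to bound the weight $2\pi h\sum_m\left|(M^{\Theta_d}_{nm}-a_m)/v_n\right|^2$ uniformly in $n$ (for $|v_n|\leqslant 1$) and in $h$, and then sum over the finitely-many-per-unit-length indices with $|v_n|\leqslant 1$. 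The natural route is to recognize the inner quantity as a Riemann-sum / sampling approximation of the continuous difference quotient $\bigl(V_w(x,v_n-v')-V_w(x,-v')\bigr)/v_n$ and to invoke the same $H^1$ estimate used in the continuous theorem, namely Theorem 3 of Chapter 5 of \cite{Evans2010}, to bound it by $\|\partial_{v'}V_w(x,\cdot)\|_{L^2}$. Because the number of indices $n$ with $|v_n|\leqslant 1$ grows like $1/h$ while each contributes a term of size $\mathcal{O}(h)$ through the $2\pi h$ prefactor, these two effects cancel and produce an $h$-independent bound.

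The main obstacle I anticipate is making the discrete-to-continuous passage rigorous and genuinely uniform in $h$. Specifically, $M^{\Theta_d}_{nm}-a_m$ involves the truncation $\chi_{(-R^h,R^h)}$ with $R^h=1/(2h)\to\infty$, so the difference quotient being estimated is that of the band-limited truncation $V_w^h$ rather than $V_w$ itself, and one must check that $\|\partial_v V_w^h(x,\cdot)\|_{L^2}$ stays controlled by $\|V_w(x,\cdot)\|_{H^1}$ as $h\to0$ — this follows because truncating $\hat{V}_w$ to $[-R^h,R^h]$ cannot increase the $H^1$ seminorm, but it needs to be stated carefully. A second delicate point is justifying that the discrete $\ell^2$ sums $2\pi h\sum_m|\cdot|^2$ are bounded by the corresponding $L^2$ integrals uniformly in $h$; since the summands are samples of band-limited functions, the Shannon isometry $\|\mathcal{I}_h f^h\|_{\ell^2}=(2\pi h)^{-1/2}\|f^h\|_{L^2}$ is exactly the tool that converts these sums to $L^2$ norms without losing uniformity, so I would lean on that identity throughout rather than on crude termwise bounds. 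Once these two points are secured, collecting the $|v_n|>1$ and $|v_n|\leqslant 1$ estimates yields $\|\mB_d[V]\|\leqslant C$ with $C$ depending only on $\|V_w(x,\cdot)\|_{H^1(\bbR_v)}$, hence independent of $h$, which is the desired conclusion.
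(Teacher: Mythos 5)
Your proposal follows essentially the same route as the paper's proof: the same split into $|v_n|\leqslant 1$ and $|v_n|>1$, the same counting argument (about $1/(\pi h)$ indices each contributing $\mathcal{O}(h)$ so the two effects cancel), the same Parseval/Shannon identification of $2\pi h\sum_m|\cdot|^2$ with an $L^2$ integral in $y$ over $(-R^h,R^h)$, and the same $H^1$ difference-quotient bound, which the paper implements directly on the Fourier side via $\bigl|(e^{\ri v_n y}-1)/v_n\bigr|\leqslant|y|$ and $\|yD_V(x,\cdot)\|_{L^2}^2=2\pi\|\partial_v V_w(x,\cdot)\|_{L^2}^2$. The only substantive deviation is that for $|v_n|>1$ you separate the $M^{\Theta_d}_{nm}$ and $a_m$ pieces and invoke $\|\Theta_d\|\leqslant 2\|V\|_\infty$ for the former, which works but imports a dependence on $\|V\|_\infty$ absent from the paper's estimate, where the combined kernel is bounded rowwise by $\|V_w(x,\cdot)\|_{L^2}$ using $|e^{\ri v_n y}-1|\leqslant 2$.
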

\begin{proof}
The proof is similar to that of Theorem 1. First,  we consder  $|v_n| \leqslant 1 $.
For $\mathbf{g} = \{g_n:n\in\bbZ\} \in
l^2(\bbZ)$,  we can write  $n$th-component of $\mB_d[V] \mathbf{g}$ as
\begin{equation}\label{mbdvn}
 ( \mB_d[V] \mathbf{g} )_{n} =  \ri h \sum_{m\in\bbZ} g_m
 \int_{-\frac{1}{2h}}^{\frac{1}{2h}} D_V(x,y) \frac{e^{\ri (v_n-v_m) y }
- e^{ -\ri v_m y } }{v_n} \dd y .
\end{equation}
by using \eqref{mbd}, \eqref{eq:MTheta_d} and \eqref{Rhdef}. 

For each $v_n$, we introduce a vector   $\mathbf{q}^{v_n}=\left\{q_m^{v_n}:m\in\mathbb{Z}\right\}$ defined as 
\begin{equation}\label{qmvn}
q_m^{v_n} = -h
\int_{-\frac{1}{2h}}^{\frac{1}{2h}}  y D_V(x,y)  \text{sinc} \frac{v_n y}{2}
  e^{\ri(\frac{v_n}{2}- v_m) y } \dd y.
\end{equation}
So $( \mB_d[V] \mathbf{g} )_{n}$ in \eqref{mbdvn} can be written as the
 $l^2(\bbZ)$ inner product of $\mathbf{q}^{v_n}$ and $\mathbf{g}$, i.e., 
\begin{equation}\label{gq}
 ( \mB_d[V] \mathbf{g} )_{n} =\left (
\mathbf{q}^{v_n},\mathbf{g}\right)_{l^2(\bbZ)}.
\end{equation}

Applying the Cauchy-Schwartz inequality to \eqref{gq}, we have 
\begin{equation} \label{zm1}
 | ( \mB_d[V] \mathbf{g} )_{n}  | \leqslant
\|\mathbf{q}^{v_n}\|_{l^2(\bbZ)} \|\mathbf{g}\|_{l^2(\bbZ)}. 
\end{equation}
By using the Parseval theorem on \eqref{qmvn}, we have  
\begin{equation}\label{zm2}
\begin{split} 
\sum_{m\in\bbZ}|q_m^{v_n}|^2  &  =h\int_{-\frac{1}{2h}}^{\frac{1}{2h}}  |
\chi_{(-R^h,R^h)}(y)   y  D_V(x,y) 
\text{sinc} \frac{v_n y}{2}
  |^2 \dd y  \\
  & \leqslant    h \int_{\bbR} |yD_V(x,y)|^2 \dd y
\leqslant
2\pi h \|V_w(x,\cdot)\|_{H^1(\bbR_v)}^2.
\end{split}
\end{equation}
The last $\leqslant$ is obtained by using the definition of 
$V_w(x,v)$ in \eqref{eq:OriginalVw}. 
From \eqref{zm1} and \eqref{zm2}, we conclude that 
$\{ | ( \mB_d[V] \mathbf{g} )_{n}  |:n\in\bbZ \}$ is bounded, i.e., 
\begin{equation} \label{zm3}
 | ( \mB_d[V] \mathbf{g} )_{n}  | \leqslant \sqrt{ 2\pi h}
  \|V_w(x,\cdot)\|_{H^1(\bbR_v)} \|\mathbf{g}\|_{l^2(\bbZ)}, \forall n \in \bbZ. 
\end{equation}
The number of $n$ such that $|v_n|\leqslant 1$ is less than $\frac{1}{\pi h}$, 
so we have 
\begin{equation}\label{xj1}
\sum_{ \{ n\in \bbZ :| (2n+1)\pi h| \leqslant 1\} }| ( \mB_d[V]
\mathbf{g} )_{n} | ^2 \leqslant 2
  \|V_w(x,\cdot)\|_{H^1(\bbR_v)}^2 \|\mathbf{g}\|_{l^2(\bbZ)}^2 . 
\end{equation}

Then we consider the case $|v_n| > 1$. Different from 
\eqref{gq}, we rewrite \eqref{mbdvn} into 
\begin{equation} \label{mbdvg1}
  ( \mB_d[V] \mathbf{g} )_{n} = \frac{1
}{v_n}(\mathbf{\tilde{q}}^{v_n},\mathbf{g})_{l^2(\bbZ)},
\end{equation}
where $\mathbf{\tilde{q}}^{v_n}=\{\tilde{q}_m^{v_n}:m\in\bbZ\}$ and 
\begin{equation} \label{qmvn1}
 \tilde{q}_m^{v_n} = \ri h
 \int_{\bbR} D_V(x,y) \chi_{(-R^h,R^h)}(y) (e^{\ri v_n y}-1)
   e^{-\ri v_m y } \dd y.
\end{equation}
Applying the Cauchy-Schwartz inequality to \eqref{mbdvg1}, we have 
\begin{equation} \label{tmpqm}
 |  ( \mB_d[V] \mathbf{g} )_{n}| \leqslant \frac{1}{v_n}
\|\mathbf{\tilde{q}}^{v_n}\|_{l^2(\bbZ)} \| \mathbf{g}\|_{l^2(\bbZ)}.
\end{equation}
By using the Parseval theorem on \eqref{qmvn1}, we have 
\begin{equation}\label{tildeqe}
\begin{split}
 \sum_{m\in\bbZ} |\tilde{q}_m^{v_n}|^2
  & = h \int_{\bbR}| D_V(x,y) \chi_{(-R^h,R^h)}(y)
(e^{\ri v_n y}-1)
   |^2 \dd y  \\
  & \leqslant 4 h  \int_{\bbR} |D_V(x,y)|^2 \dd y  \\
  & =4  h  \|D_V(x,\cdot)\|_{L^2(\bbR)}^2  \\
  & = 8\pi h  \| V_w(x,\cdot)\|_{L^2(\bbR)}^2 .
 \end{split} 
\end{equation}
Plugging \eqref{tildeqe} into \eqref{tmpqm} yields the estimate
\begin{equation}
 |  ( \mB_d[V] \mathbf{g} )_{n}| \leqslant \frac{2\sqrt{2 \pi h}}{v_n
}
 \| V_w(x,\cdot)\|_{L^2(\bbR)} \| \mathbf{g}\|_{l^2(\bbZ)}.
\end{equation}
Recalling the relation between $\sum_{n \in\bbZ, |v_n| >1}\frac{2\pi
h}{|v_n|^2}$ and $\int_{|v|>1}\frac{1}{v^2}\dd v$, we know that 
there exist a constant $C_2$ which does not depend on $h$ such that 
\begin{equation}\label{xj2}
\sum_{ n \in \{ n\in \bbZ : (2n+1)\pi h > 1\} }| ( \mB_d[V]
\mathbf{g} )_{n} | ^2 \leqslant C_2
  \|V_w(x,\cdot)\|_{L^2(\bbR_v)}^2 
  \| \mathbf{g}\|_{l^2(\bbZ)}^2.  
\end{equation}

Putting \eqref{xj1} and \eqref{xj2} together, we come to a conclusion
that 
there exists a constant $C$ which does not dependent on 
$h$ such that 
\begin{equation}
\sum_{ n\in \bbZ} | ( \mB_d[V]
\mathbf{g} )_{n} | ^2  \leqslant C \|V_w(x,\cdot)\|_{H^1(\bbR_v)}^2
\sum_{m\in\bbZ} |g_m|^2 ,
\end{equation}
which completes the proof that $\mB_d[V]$ is a uniformly bounded
linear operator on $l^2$. 
\end{proof}
We have proved that the discrete operators of the scheme based on the signularity-free stationary Wigner equation 
is uniformly bounded with respect to the velocity mesh size.  So the numerical solution using the singularity-free scheme 
could be expected to have a better performance. In the next section, we  will validate this by providing some numerical 
examples.

\section{Numerical Examples}
We consider a potential $V(x)$ given as
\[ \begin{array}{l}
     V \left( x \right) = \left\{ \begin{array}{ll}
       0, & \mathrm{if}\ x \notin \left[ -1.5, 1.5 \right],\\
       0.2, & \mathrm{if}\ x \in \left[ 1.5, 1.5 \right].
     \end{array} \right.
   \end{array} \]
It can be used to describe a square potential barrier of length $3$ whose center is
at $x=0$. $0.2$ is the height of the barrier.  We use the interval $[-25,25]$ as  
the computation domain in the  $x$-space , which means a device with length $50$ is simulated. Two contacts are 
put at the two ends, and the inflow boundary conditions are applied. 
 Set $N_x=l/\Delta x$, the grid number in the  $x$-direction. Truncate the vector 
 $\mathbf{f}$ to be finite. In order to be concise, we use the same symbol 
 $\mathbf{f}= \left\{f_n: n=-N_v/2,\cdots, N_v/2-1\right\}\in \mathbb{R}^{N_v}$
 as before to  denote a numerical distribution computed by using the full-discretization scheme.
$N_v$ is the grid number in the $v$-direction. Recall that the mesh size
 in the $v$-direction $\Delta v=2\pi h =\pi/R^h$.
The trapezoidal quadrature rule is used to calculate  the numerical Wigner
 potential 
 \begin{equation} \label{trapzoid}
  V_w(x,v;L_y,\Delta y) =
  -\frac{1}{\pi} \sum_{j=1}^{N_y} D_V(x,j\Delta y) \sin(j \Delta y v) \Delta y,
 \end{equation}
 where $N_y=L_y/\Delta y$. To avoid aliasing error, we choose $L_y<R^h$.
 The elements of $M^{\Theta_d}$ in \eqref{eq:MTheta_d} and   
 $a_m$ in \eqref{eq:a_m} are all obtained by using  \eqref{trapzoid}. 
 For the discretization in the  $x$-space , we use the  2nd-order upwind finite difference
scheme, that is  
\begin{equation}\label{eq:2ndUDS}
\begin{split}
\frac{\partial f(x_i,v)}{\partial x} = 
	\frac{3 f \left( x_i, v \right) 
- 4 f \left(x_{ i - 1}, v \right) + f \left( x_{i - 2}, v \right)}
{2 \Delta x}
, \quad \mathrm{if} \ \ v >0 ,  \\
  \frac{\partial f(x_i,v)}{\partial x} = \frac{- f 
\left( x_{i + 2}, v \right) + 4 f \left( x_{i + 1}, v \right) 
- 3 f \left( x_i, v \right)}{2 \Delta x},
 \quad \mathrm{if} \ \ v<0.  
 \end{split}
 \end{equation}

For convenience, we call  the discretization \eqref{eq:DiscreteProblem}+\eqref{eq:2ndUDS} original 
scheme, and call the discretization \eqref{eq:NewDiscreteProblem}+\eqref{eq:2ndUDS}   improved 
scheme. 

\subsection{Comparison between the original scheme and the improved scheme}
We consider that the electron inflows only from the left contact. 
We will compare the results computed by the original scheme \eqref{eq:DiscreteProblem}+\eqref{eq:2ndUDS}
and our improved scheme \eqref{eq:NewDiscreteProblem}+\eqref{eq:2ndUDS}.
We plot the distribution function as a function of $v$ close to the
left contact shown in \figref{fig1} and at center of the device in \figref{fig2}. 
From the two figures in \figref{fig:compare}, we find that the distributions
are evidently different, especially at the point $v=0$. The numerical distribution function
obtained using the original scheme grows very fast when $v\rightarrow 0$, which reflects 
the effect of the singularity at $v=0$.  Our signularity-free scheme succeeds in solving  the singularity issue.
\begin{figure}
\centering
\subfigure[close to the left of the contact]{
\label{fig1}
\includegraphics[width=7cm]{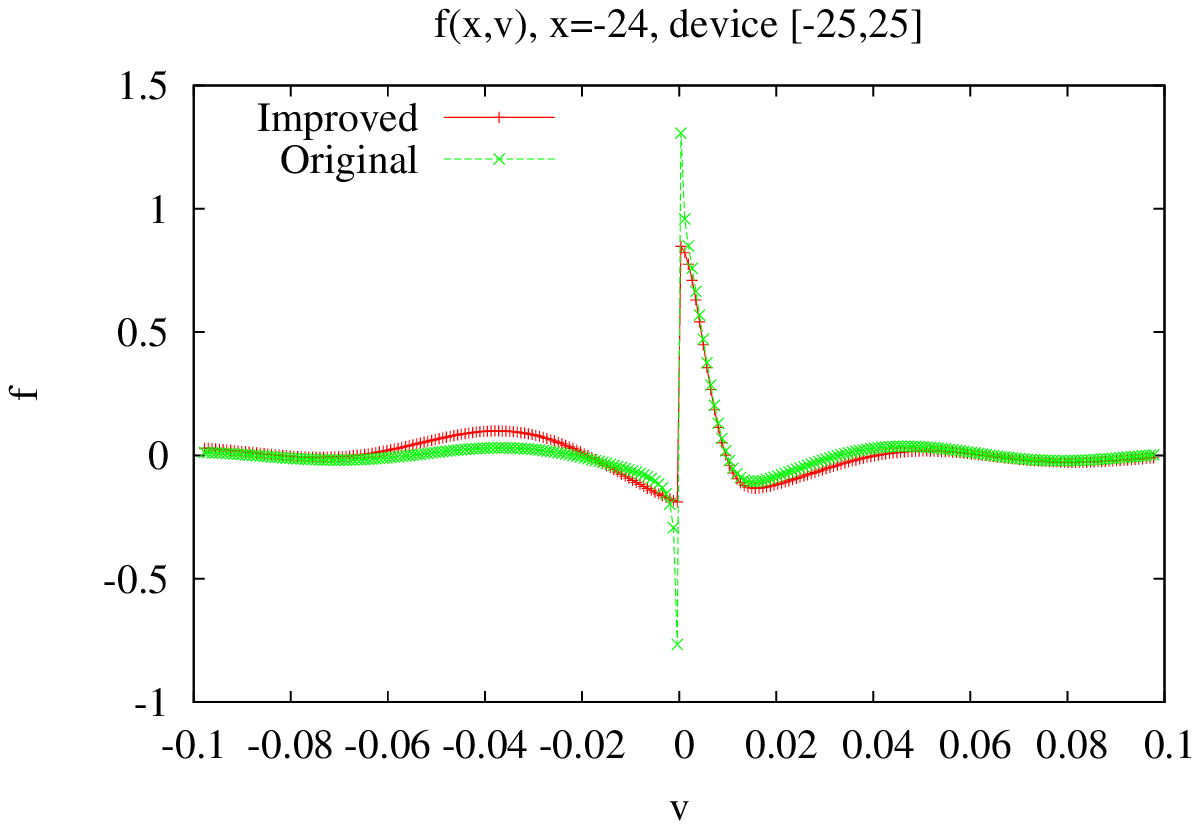}
}
\subfigure[at the center of the device]{
\includegraphics[width=7cm]{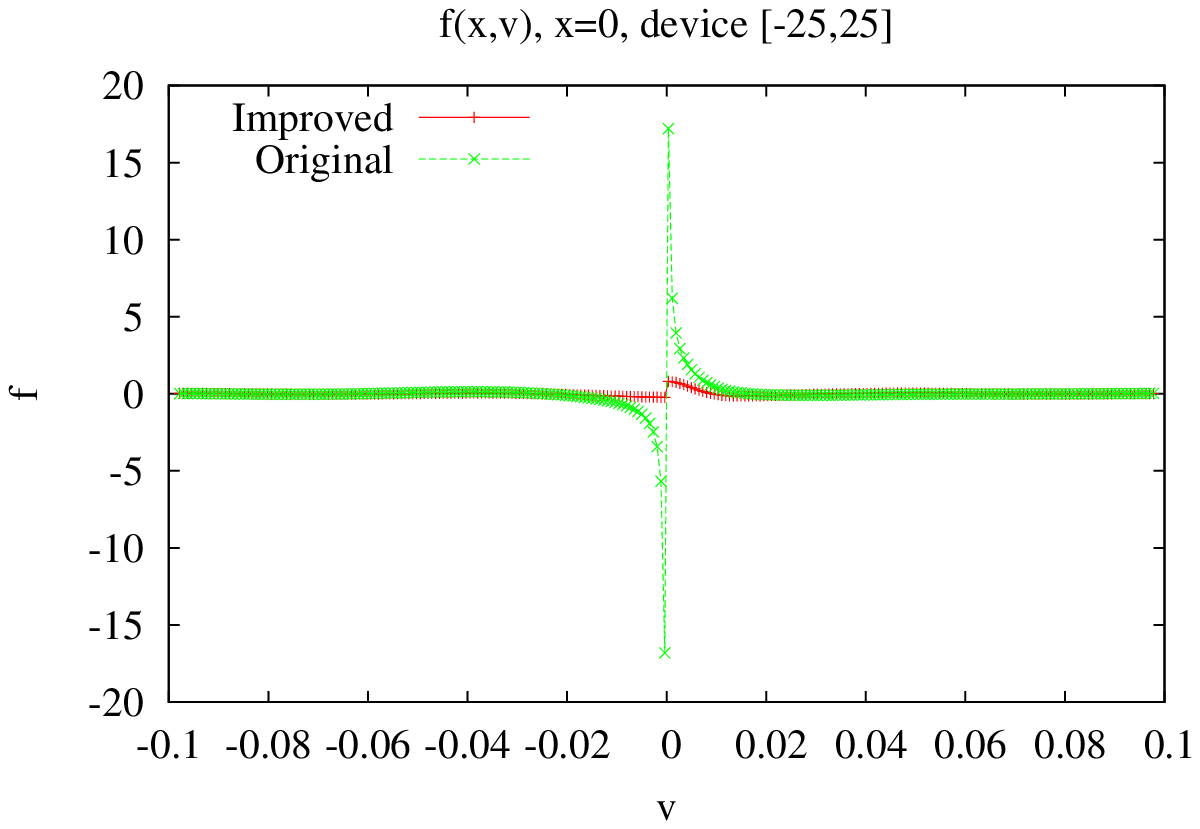}
\label{fig2}
}
% \subfigure[close to the right contact]{
% \includegraphics[width=7cm]{fig3.eps}
% \label{fig3}
% }
\caption{\small  
Device $x\in [-25,25]$. 
The height of the barrier$H$ is $0.2$, and the barrier is put in the center of devices, $[-1.5,1.5]$. 
Inflow only from the left contact, $f_L(v)= \exp(-v^2/0.0002)$ and $f_R(v)=0$.
$N_x=100$. $N_v=128$. $R^h=2048$. $Ly=31$. $\Delta y=0.5$.
\label{fig:compare}} 
\end{figure}

\subsection{Convergence \label{sec:convergence}}
Now we concentrate on studying the convergence with respect to the $x$-mesh size $\Delta x$ and 
with the $v$-mesh size $\Delta v$, respectively. In this example, 
we set $f_L(v)=e^{-(v-0.5\pi)^2/0.25}$, $f_R(v)=0$ as the inflow boundary conditions.

To study the convergence of $v$-direction, we fix $N_x=100$ ( which corresponds to $\Delta x=0.5$). 
We fix the velocity interval $[-\pi,\pi]$, and set $\Delta v= \frac{2\pi}{N_v}$.  The number of velocity points
$N_v=64,128, 256,512,1024$  will be used. Correspondingly, we choose 
$R^h= 32, 64, 128, 256, 512$, which means $\Delta v$ is equal to $2\pi h$ ($h=\frac{1}{2R^h}$). 
 $\Delta y=1.0$ will be used to evaluate the numerical Wigner potential.
 
%{\color{red} $L^2$-norm  formula needs to be given?}
The $L_2$-norm error is given in the Table \ref{tab:NvError} where the reference
is the solution of the finest mesh ($N_v= 1024$).  
 \begin{table}
 \centering
  \begin{tabular}{|c|c|c|c|c|}
   \hline
    $N_v$& Original &Order & Improved &Order\\
    \hline
    64 & 0.2756 & & 0.05906&\\
    \hline
    128 & 0.2466 &  0.1604 & 0.01446& 2.0301\\
    \hline
    256 & 0.2090 & 0.2386&  0.003473& 2.0577 \\
    \hline
    512 & 0.1505 & 0.4742 &  0.0007513& 2.2090\\
    \hline
  \end{tabular}
  \caption{\small The $L^2$-norm error of the distribution function obtained by using the original scheme and the improved scheme with different numbers of velocity mesh points in
  the fixed velocity interval $[-\pi,\pi]$.\label{tab:NvError}}
 \end{table}

To study the convergence of $x$-direction, we fix the range of velocity $[-\pi,\pi]$, $N_v=256$, 
 $R^h=128$, $L_y=64$. $\Delta y=1.0$. Then we implement the numerical computation by using 
 the orignal scheme and the improved scheme with $N_x=25, 50, 100, 200, 400$, respectively.
 We use the numerical distribution calculated on the finnnest mesh $N_x=400$ as the reference, 
 then the $L^2$-errors and  the convergence orders are shown in Table \ref{tab:NxError}.
 
 \begin{table}
 \centering
  \begin{tabular}{|c|c|c|c|c|}
   \hline
    $N_x$& Original &Order & Improved &Order\\
    \hline
    25 & 0.4208 & & 0.1653&\\
    \hline
    50 & 0.1792 &  1.2322 & 0.0613& 1.4312\\
    \hline
    100 & 0.0623 & 1.5238&  0.0156& 1.9753 \\
    \hline
    200 & 0.0131 & 2.2549 &  0.0030& 2.3590\\
    \hline
  \end{tabular}
  \caption{\small The $L^2$-norm error of the distribution function obtained by using the original scheme and the improved scheme with different numbers of $x$ mesh points in
  the fixed space interval $[-25,25]$.\label{tab:NxError}}
 \end{table}

 \begin{figure}
  \centering
  \subfigure[$N_v-Error$]{
  \includegraphics[width=7cm]{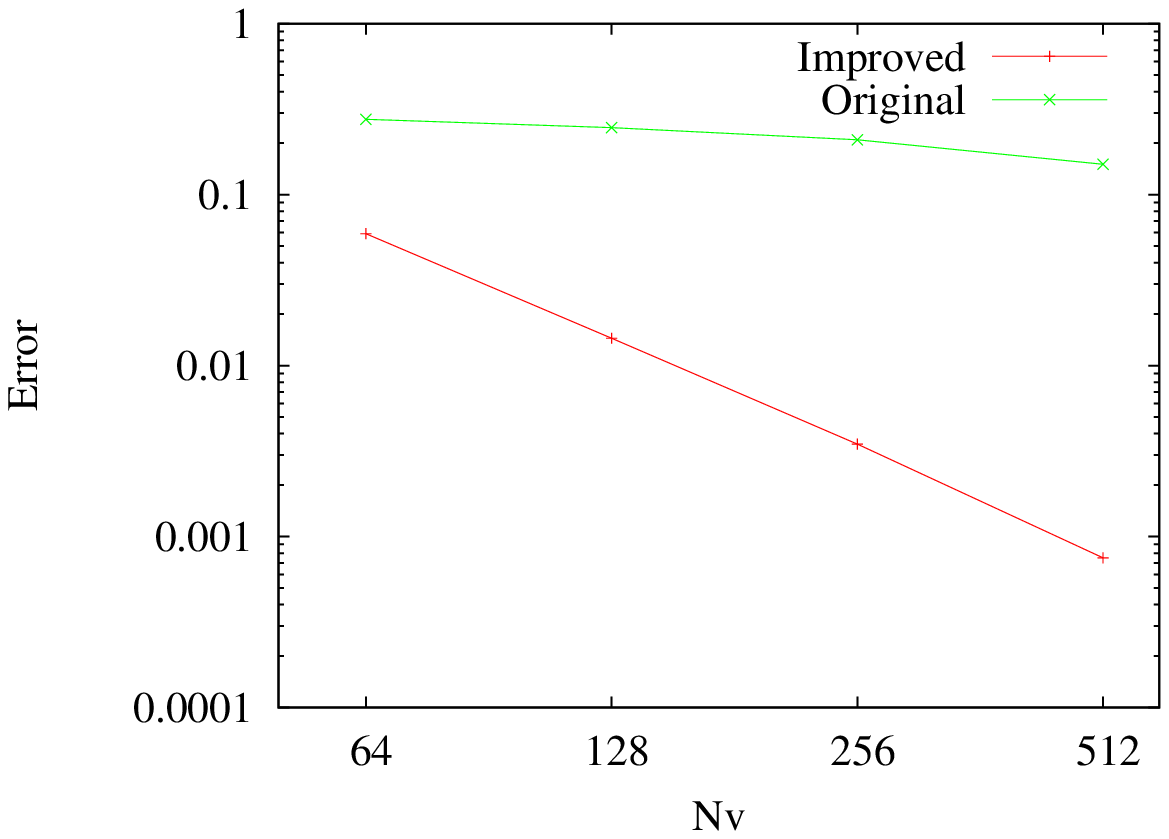}
  }
  \subfigure[$N_x-Error$]{
  \includegraphics[width=7cm]{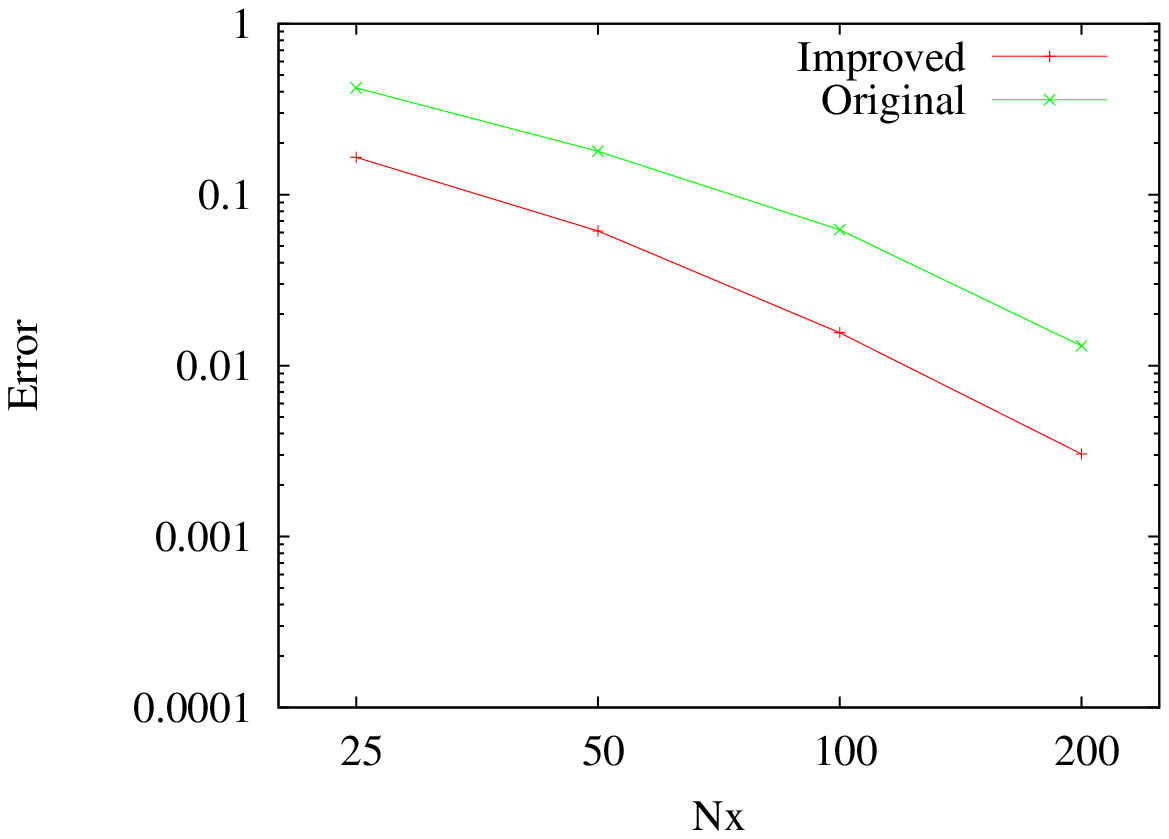}
  }
  \caption{\small Error change with mesh size in $v$-space and $x$-space\label{fig:error} }
 \end{figure}

We can conclude from \figref{fig:error} that improved scheme converges faster than 
original scheme in the $v$-direction, which is mainly contributed to the improved
scheme is based on the equivalent singularity-free Wigner equation. 
The convergence order of improved scheme is 2.0948 while that of original scheme is 0.2853 in $v$-space.  
Original scheme hardly converges for this problem.
$x$-direction, original scheme and improved scheme both gain  convergence. 
The convergence order of the original scheme and improved scheme is 1.6556 and 
1.9272 respectively. This is roughly conformed to the the theoretical analysis of 
second-order upwind scheme.  Therefore, improved scheme is better than original scheme in
convergence.

When we derive the equivalent from of the Wigner equation. We have used an algebra constraint $\eqref{eq:V0}$.
In order to check how well the constraint is sastisfied numerically, we introduce 
\begin{equation}
S(N_v) = \max_i\left\{\sum_{j=-N_v/2}^{N_v/2-1} f(x_i,v_j)V_w(x_i,v_j) \Delta v \right\}
\end{equation}
which is a numerical approximation  of $\max_{x\in[-25,25]}|\int_{\mathbb{R}} V_w(x,-v')f(x,v')\dd v'|$. 
 In Table \ref{tab:Rat}, we list $S(N_v)$ calculated with different $N_v$'s, which shows that 
 $S(N_v)$ decreases to $0$ with a order $1.0$ as $\Delta v \rightarrow 0$  for both the original
scheme and the improved scheme.  This implies that the solution of the stationary Wigner equation 
satisfies the constraction \eqref{eq:V0}, and the explicit use of this property in our improved scheme 
helps in removing singularity  and improving numerical convergence. 

\begin{table}
\centering
 \begin{tabular}{|c|c|c|c|c|c|}
 \hline 
 $N_v$ & 64 &128 & 256 & 512 & 1024 \\
 \hline
  Original & 4.7370e-4 & 2.3550e-4 & 1.1710e-4 & 5.8227e-5 & 2.8945e-5 \\
  \hline
  Improved & 4.2746e-4 & 2.1371e-4 & 1.0685e-5  & 5.3427e-5 & 2.6713e-5 \\
  \hline
 \end{tabular}
\caption{\small $S(N_v)$ with different $N_v$\label{tab:Rat}}
\end{table}

% vim: tw=70:spell
\section{Conclusion}
By using an algebra of the stationary Wignner equation, we have proposed 
a singularity-free scheme, whose numerical convergence with respect
to the velocity mesh size has been validated by numerical experiments. 
We believe that it is the first time that the 
numerical convergence with respect to the velocity mesh size has been obtained for the
stationary Wigner equation with inflow boundary conditions. We will investigate
whether it could be applied in simulation of nano-scale semiconductor devices where
the potential function may not satisfy the condition in Theorem \ref{Thm:BoundofB}.

\section*{Acknowledgements}
%The authors thankfully acknowledge OeAD-GmbH for
%financing the project in the frame of the EURASIA-Pacific Uninet.
This research was supported in part by the NSFC (91434201,91230107,11421101).

%%% Local Variables: 
%%% mode: latex
%%% TeX-master: "article"
%%% End: 

\end{document}